\newtheorem{lemma}{Lemma}[section]
\newtheorem{theorem}{Theorem}[section]
\newtheorem{proof}{Proof}
\newtheorem{remark}{Remark}[section]
\newtheorem{example}{Example}[section]
\begin{document}

\title{A non-conditional divergence criteria of Petrov-Galerkin method for bounded linear operator equation}
\author{Yidong Luo}
\address{School of Mathematics and Statistics, Wuhan University, Hubei Province, P. R. China}
\ead{Sylois@whu.edu.cn}
\vspace{10pt}
\begin{indented}
\item[]August 2017
\end{indented}

\begin{abstract}
 Petrov-Galerkin methods are always considered in numerical solutions of differential and integral equations $ Ax=b $. It is common to consider the convergence and error analysis when $ b \in \mathcal{R}(A) $ which make the equation solvable. However, the case when $ b \notin \mathcal{R}(A) $ is always ignored. In this paper, we consider the numerical behavior of Petrov-Galerkin methods when $ b \notin \mathcal{R}(A) $. It is a natural guess that when $ b \in \mathcal{R}(A) $, the corresponding approximate solution constructed by Petrov-Galerkin methods with arbitrary basis will diverge to infinity. We prove this conjecture for bounded linear operator equation with dense range $ \mathcal{R}(A) $ and give a more general divergence result for bounded linear operator equation with not necessarily dense range $ \mathcal{R}(A) $. Several applications show its power.
\end{abstract}
%
%
%
%
%

\section{Introduction of divergence analysis and main result}
Let $ X,Y $ be Hilbert spaces over the complex scalar field, $ \mathcal{B}(X,Y) $ and $ \mathcal{C}(X,Y) $ denotes the set of all bounded, compact linear operators  mapping from $ X $ to $ Y $ respectively, $ \{ X_n \} $ and $ \{ Y_n \} $ be sequences of closed subspaces of $ X $ and $ Y $ respectively, $ P_n := P_{X_n} $ and $ Q_n:= Q_{Y_n} $ be orthogonal projection operators which project $ X $ and $ Y $ onto $ X_n $ and $ Y_n $ respectively.

 Let the original operator equation of the first kind be
\begin{equation}
Ax = b, \quad A \in \mathcal{B}(X,Y), \ x \in X, \ b \in \mathcal{R}(A) \oplus \mathcal{R}(A)^\perp
\end{equation}
with Moore-Penrose inverse solution $ x^\dagger := A^\dagger b $. The Petrov-Galerkin approximation setting of (1.1) is
\begin{equation}
A_n x_n = b_n, \ A_n \in \mathcal{B}(X_n,Y_n), \ x_n \in X_n, \ b_n:= Q_n b  \in Y_n,
\end{equation}
where the general Petrov-Galerkin approximation operator $ A_n := Q_n A P_n : X_n \to Y_n $ possesses  closed range $ \mathcal{R}(A_n) $. Without assumptions on the existence or uniqueness of solution to (1.2), we denote the Moore-Penrose inverse solution of (1.2) by $ x^\dagger_n := A^\dagger_n Q_n b $.
\newline \indent It is a common way to analyze the convergence of $ x^\dagger_n $ to $ x^\dagger $ when $ b \in \mathcal{R}(A) $. There is a large amount of literature inside this topic, see [2-6] and references therein. However, in practical computation, there always interfuse noise into $ b $, thus it is disrupted into $ b^\delta := b + \delta b $ which may not locate in $ \mathcal{R}(A) $. This leads to a question: what is the numerical behavior of approximation solution $ x^\dagger_n := A_n^\dagger Q_n b $ when $ b \notin \mathcal{R}(A) $?
\newline \indent The literature on this topic is few. We only find the result in [4]:
\begin{lemma}
For Petrov-Galerkin setting (1.1), (1.2), if $ (\{ X_n \}_{n \in \mathrm{N}}, \{ Y_n \}_{n \in \mathrm{N}} ) $ satisfies the completeness condition, that is,
\begin{equation*}
\hbox{ \raise-2mm\hbox{$\textstyle s-\lim \atop \scriptstyle {n \to \infty}$}} P_n = I_X, \quad
\hbox{ \raise-2mm\hbox{$\textstyle s-\lim \atop \scriptstyle {n \to \infty}$}} Q_n = I_Y,
\end{equation*}
and
\begin{equation}
\sup_n \Vert A^\dagger_n Q_n A \Vert < \infty
\end{equation}
where $ \dagger $ denotes the Moore-Penrose inverse of linear operator (See [1,3]),
then, for $ b \notin \mathcal{D}(A^\dagger) = \mathcal{R}(A) \oplus \mathcal{R}(A)^\perp $,
\begin{equation}
\lim_{n \to \infty} \Vert A^\dagger_n Q_n Q_{\overline{\mathcal{R}(A)}} b \Vert = \infty.
\end{equation}
\end{lemma}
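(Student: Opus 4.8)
The plan is to argue by contradiction, the engine being weak sequential compactness of bounded sets in the Hilbert space $X$ combined with the Galerkin orthogonality (normal–equation characterization) of $A_n^\dagger$. First I would reduce the statement: setting $y := Q_{\overline{\mathcal{R}(A)}}b$ so that the quantity in question is $\Vert A_n^\dagger Q_n y\Vert$, I would observe that $b\notin\mathcal{D}(A^\dagger)=\mathcal{R}(A)\oplus\mathcal{R}(A)^\perp$ is equivalent to $y\in\overline{\mathcal{R}(A)}\setminus\mathcal{R}(A)$, because $b-y=(I-Q_{\overline{\mathcal{R}(A)}})b\in\mathcal{R}(A)^\perp$, so $y\in\mathcal{R}(A)$ would put $b$ back into $\mathcal{D}(A^\dagger)$. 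In particular $y$ is a fixed nonzero vector of $\overline{\mathcal{R}(A)}$ not attained by $A$; this non-attainment is the only place the hypothesis on $b$ is spent.

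Next I would suppose the conclusion fails, i.e.\ $\liminf_n\Vert A_n^\dagger Q_n y\Vert<\infty$, so that along some subsequence the vectors $z_n:=A_n^\dagger Q_n y\in X_n$ stay bounded, and pass to a further subsequence with $z_n\rightharpoonup z$ weakly in $X$. Since $\mathcal{R}(A_n)$ is closed, $z_n$ is the minimum-norm least-squares solution of $A_nx_n=Q_ny$ in $X_n$, so $A_nz_n-Q_ny\perp\mathcal{R}(A_n)$; testing this against $A_nv$ with the \emph{moving} trial element $v=P_nw$, $w\in X$ arbitrary but fixed, and simplifying with $A_n=Q_nAP_n$, $P_nz_n=z_n$, $P_n^2=P_n=P_n^*$, $Q_n^2=Q_n=Q_n^*$, I would arrive at the clean identity $\langle Az_n-y,\ Q_nAP_nw\rangle_Y=0$ for every $n$ in the subsequence and every $w\in X$. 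Letting $n\to\infty$: boundedness of $A$ gives $Az_n\rightharpoonup Az$, while the completeness condition gives $Q_nAP_nw\to Aw$ strongly (since $\Vert Q_nAP_nw-Aw\Vert\le\Vert A\Vert\,\Vert P_nw-w\Vert+\Vert Q_n(Aw)-Aw\Vert\to0$); as the pairing of a weakly convergent sequence against a strongly convergent one passes to the limit, I would conclude $\langle Az-y,\ Aw\rangle_Y=0$ for all $w\in X$, i.e.\ $Az-y\in\mathcal{R}(A)^\perp$.

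To finish, note that $Az\in\mathcal{R}(A)\subseteq\overline{\mathcal{R}(A)}$ and $y\in\overline{\mathcal{R}(A)}$, so $Az-y\in\overline{\mathcal{R}(A)}\cap\mathcal{R}(A)^\perp=\{0\}$, whence $y=Az\in\mathcal{R}(A)$, contradicting the first step; hence $\liminf_n\Vert A_n^\dagger Q_n y\Vert=\infty$, which is (1.4). The routine points I would not dwell on are the boundedness and the defining projection property of $A_n^\dagger$ on a closed-range operator, and the elementary weak-times-strong convergence fact. The one step I expect to be delicate, and where the proof really lives, is the second one: one must test the Galerkin orthogonality against $P_nw$ rather than against a fixed $w$, so that after passing to the limit the test vectors $Q_nAP_nw$ still converge to $Aw$ — this is exactly what both halves of the completeness condition are for, and it is what makes the limiting inner-product identity, hence the contradiction, come out. (It is worth remarking that this route does not actually invoke $\sup_n\Vert A_n^\dagger Q_nA\Vert<\infty$; that uniform bound is what one would need for the companion \emph{convergence} statement when $b\in\mathcal{D}(A^\dagger)$.)
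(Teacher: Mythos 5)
Your proof is correct, and it is worth comparing with the paper's own argument (the paper only cites [4] for Lemma 1.1, but proves the stronger statement, without hypothesis (1.3), as Theorem 1.1 in Section 2). Both proofs share the same skeleton: assume a bounded subsequence $z_{n_k}=A_{n_k}^\dagger Q_{n_k}y$ with $y=Q_{\overline{\mathcal{R}(A)}}b$, extract a weak limit $z$ by reflexivity, and force the contradiction $y=Az\in\mathcal{R}(A)$. Where you diverge is in how the limit is identified. The paper uses the operator identity $A_{n}A_{n}^\dagger=Q^{(n)}_{\mathcal{R}(A_{n})}$ and its Lemma 2.2 (built on [6, Lemma 2.2(a)]) to show $A_{n_k}z_{n_k}=Q^{(n_k)}_{\mathcal{R}(A_{n_k})}Q_{n_k}y\to y$ \emph{strongly}, then matches this against the weak limit $Az$ of $Q_{n_k}Az_{n_k}$. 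You instead keep the same fact in variational form, $A_nz_n-Q_ny\perp\mathcal{R}(A_n)$, test it against the moving trial vectors $A_nP_nw$, and pass to the limit in the bilinear identity $\langle Az_n-y,\,Q_nAP_nw\rangle=0$ using the weak-times-strong convergence; the conclusion $Az-y\in\overline{\mathcal{R}(A)}\cap\mathcal{R}(A)^\perp=\{0\}$ then replaces the paper's direct identification $y=Az$. Your route is somewhat more self-contained: it bypasses Lemma 2.2 and the cited convergence of range projections entirely, at the cost of a slightly longer chain of elementary manipulations; the paper's route isolates the projection-convergence fact as a reusable lemma. Both correctly observe that the uniform bound (1.3) plays no role in the divergence direction, which is precisely the point of the paper. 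One cosmetic remark: your reduction of $b\notin\mathcal{D}(A^\dagger)$ to $y\in\overline{\mathcal{R}(A)}\setminus\mathcal{R}(A)$ is exactly the paper's final decomposition step, just placed at the beginning, which makes the logic a little cleaner.
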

\begin{proof}
See [4, Theorem 2.2 (c)]
\end{proof}
\begin{remark}
In particular, if $ b \in \overline{\mathcal{R}(A)} \setminus \mathcal{R}(A) $, we have $ \lim_{n \to \infty} \Vert A^\dagger_n Q_n b \Vert_{X} = \infty. $ Furthermore, if $ A \in \mathcal{B}(X,Y) $ possess a dense range in $ Y $, then $ \lim_{n \to \infty} \Vert A^\dagger_n Q_n b \Vert_{X} = \infty, \quad (b \in Y \setminus \mathcal{R}(A)) $.
\end{remark}
\begin{remark}
Notice that, for some bounded linear operator $ A $, $ Y \setminus \mathcal{R}(A) $ can be far bigger than $ \mathcal{R}(A)  $. See Example 3.1.
\end{remark}

\indent 
Above result can partially answer to the question proposed before. However, the verification of condition (1.3) is a non-trivial task (See [2, Chapter 3]). In this paper, we prove that, without (1.3), above divergence result also holds:
\begin{theorem}
Let $ A $ be a bounded linear operator mapping from $ X $ to $ Y $, $ X,Y $ be infinite-dimensional Hilbert spaces, $ \{ \xi_k \}^{\infty}_{k=1} $, $ \{ \eta_k \}^\infty_{k=1} $ be arbitrary $ X $ basis , $ Y $ basis respectively, $ \{ X_n \} $, $ \{ Y_n \} $ be corresponding subspace sequence, that is, $ X_n = \textrm{span} \{ \xi_k \}^{n}_{k=1} $, $ Y_n := \textrm{span}\{ \eta_k \}^{n}_{k=1} $. Then the following three properties holds:
 \newline \indent (a) For any $ b \notin \mathcal{D}(A^\dagger) = \mathcal{R}(A) \oplus \mathcal{R}(A)^\perp $,
\begin{equation}
\lim_{n \to \infty} \Vert A^\dagger_n Q_n Q_{\overline{\mathcal{R}(A)}} b \Vert_{X} = \infty,
\end{equation}
where $ A_n := Q_n A P_n: X_n \to Y_n $ is the general Petrov-Galerkin approximation operator. 
\newline \indent (b) In particular, if $ b \in \overline{\mathcal{R}(A)} \setminus \mathcal{R}(A) $, we have
\begin{equation}
\lim_{n \to \infty} \Vert A^\dagger_n Q_n b \Vert_{X} = \infty.
\end{equation}

\indent (c) If $ A \in \mathcal{B}(X,Y) $ possess a dense range in $ Y $, then (1.6) is transformed into
\begin{equation}
\lim_{n \to \infty} \Vert A^\dagger_n Q_n b \Vert_{X} = \infty, \quad (b \in Y \setminus \mathcal{R}(A))
\end{equation}
\end{theorem}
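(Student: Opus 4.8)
The plan is to establish (b) first and to obtain (a) and (c) from it by elementary reductions. For (a), set $c := Q_{\overline{\mathcal{R}(A)}}b$; writing $b = c + (b-c)$ with $c \in \overline{\mathcal{R}(A)}$ and $b-c \in \overline{\mathcal{R}(A)}^\perp = \mathcal{R}(A)^\perp$, the hypothesis $b \notin \mathcal{D}(A^\dagger) = \mathcal{R}(A)\oplus\mathcal{R}(A)^\perp$ is equivalent to $c \in \overline{\mathcal{R}(A)}\setminus\mathcal{R}(A)$, and since $A_n^\dagger Q_n Q_{\overline{\mathcal{R}(A)}}b = A_n^\dagger Q_n c$, formula (1.6) applied to $c$ is exactly (1.5). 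For (c), density of $\mathcal{R}(A)$ gives $\overline{\mathcal{R}(A)} = Y$, hence $\mathcal{R}(A)^\perp = \{0\}$ and $Y\setminus\mathcal{R}(A) = \overline{\mathcal{R}(A)}\setminus\mathcal{R}(A)$, so (1.8) is just (1.6) in that case. Two structural remarks are used throughout: since $\{\xi_k\}$ and $\{\eta_k\}$ are bases, $\bigcup_n X_n$ and $\bigcup_n Y_n$ are dense, hence the orthogonal projections satisfy $P_n \to I_X$ and $Q_n \to I_Y$ strongly (this is the only feature of the bases we shall use); and $A_n$ has finite rank, so $\mathcal{R}(A_n)$ is automatically closed, $A_n^\dagger$ is the ordinary pseudoinverse, and $A_n A_n^\dagger$ is the orthogonal projection onto $\mathcal{R}(A_n)$.

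For (b) I would argue by contradiction. If (1.7) fails, there are a constant $M$ and a subsequence, relabelled $\{n\}$, along which $x_n := A_n^\dagger Q_n b$ satisfies $\Vert x_n\Vert_X \le M$. Being bounded in a Hilbert space, $\{x_n\}$ has a weakly convergent subsequence $x_{n_k} \rightharpoonup x^\ast$. Since $x_n \in X_n$ we have $A_n x_n = Q_n A P_n x_n = Q_n A x_n$, and $A_n x_n = A_n A_n^\dagger Q_n b$ is the orthogonal projection of $Q_n b$ onto $\mathcal{R}(A_n)$. The first key step is to show $\mathrm{dist}(Q_n b,\mathcal{R}(A_n)) \to 0$: given $\varepsilon > 0$ choose $z \in X$ with $\Vert Az - b\Vert < \varepsilon$ (possible because $b \in \overline{\mathcal{R}(A)}$); then $A_n P_n z \in \mathcal{R}(A_n)$ and $\Vert Q_n b - A_n P_n z\Vert = \Vert Q_n(b - A P_n z)\Vert \le \Vert b - Az\Vert + \Vert A\Vert\,\Vert (I-P_n)z\Vert$, whose $\limsup$ in $n$ is at most $\varepsilon$ since $P_n \to I_X$ strongly. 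Combined with $Q_n b \to b$ this yields $A_n x_n = Q_n A x_n \to b$ strongly.

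The second key step converts this into weak convergence of $A x_n$ itself: for fixed $y \in Y$, $\langle A x_n, y\rangle = \langle Q_n A x_n, y\rangle + \langle A x_n,(I-Q_n)y\rangle$, where the first term tends to $\langle b, y\rangle$ and the second is bounded by $\Vert A\Vert M\,\Vert (I-Q_n)y\Vert \to 0$; hence $A x_n \rightharpoonup b$. Since $A$ is bounded, $x_{n_k} \rightharpoonup x^\ast$ forces $A x_{n_k} \rightharpoonup A x^\ast$, so $A x^\ast = b$, contradicting $b \notin \mathcal{R}(A)$. This proves (1.7), and with the reductions above, also (1.5) and (1.8).

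The step I expect to be the main obstacle is the first key step, $\mathrm{dist}(Q_n b,\mathcal{R}(A_n)) \to 0$ — more precisely, recognising that this is exactly the place where the hypothesis $b \in \overline{\mathcal{R}(A)}$ (equivalently, denseness in part (c)) is indispensable, and where one would otherwise be tempted to invoke the uniform bound (1.3). Everything else is soft: weak compactness of bounded sets, strong convergence of the coordinate projections, and weak--weak continuity of $A$.
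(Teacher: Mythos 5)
Your proposal is correct and follows essentially the same route as the paper: argue by contradiction, extract a weakly convergent subsequence from the assumed bounded one, show $A_nA_n^\dagger Q_n b \to b$ strongly via density of $\bigcup_n X_n$ and $\bigcup_n Y_n$ (this is exactly the content of the paper's Lemma 2.2), and use weak--weak continuity of $A$ to conclude $b \in \mathcal{R}(A)$, a contradiction. The only difference is organizational — you prove (b) first and reduce (a) to it via $A_n^\dagger Q_n Q_{\overline{\mathcal{R}(A)}}b = A_n^\dagger Q_n c$ with $c \in \overline{\mathcal{R}(A)}\setminus\mathcal{R}(A)$, whereas the paper proves (a) directly — and that reduction is valid.
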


\section{Proof}
Before the proof, we first prepare some technical lemmas.
\begin{lemma}
Let $ A \in \mathcal{B}(X,Y) $ and $ A_n := A P_n : X_n \to Y $, $ \{ X_n \}_{n \in \mathrm{N}} $ satisfies the completeness condition, that is,
\begin{equation*}
\hbox{ \raise-2mm\hbox{$\textstyle s-\lim \atop \scriptstyle {n \to \infty}$}} P_n = I_X,
\end{equation*}
then there holds
\begin{equation}
\mathcal{R}(A_n) = A(X_n) \subseteq \mathcal{R}(A), \ \hbox{ \raise-2mm\hbox{$\textstyle s-\lim \atop \scriptstyle {n \to \infty}$}} Q_{\mathcal{R}(A_n)} = Q_{\overline{\mathcal{R}(A)}}.
\end{equation}
\end{lemma}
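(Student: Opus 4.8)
The plan is to treat the two assertions in (2.1) separately; the first is a formal identity and the second is a two-step density argument.

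For the range identity, note that $P_n$ is by definition the orthogonal projection onto $X_n$, so $\mathcal{R}(P_n)=X_n$ and therefore $\mathcal{R}(A_n)=\mathcal{R}(AP_n)=A(\mathcal{R}(P_n))=A(X_n)$, while $A(X_n)\subseteq A(X)=\mathcal{R}(A)$. (When $X_n$ is finite-dimensional, as in Theorem 1.1, $A(X_n)$ is automatically closed and $Q_{\mathcal{R}(A_n)}$ is a genuine orthogonal projection; in general one should read $Q_{\mathcal{R}(A_n)}$ as the projection onto $\overline{\mathcal{R}(A_n)}=\overline{A(X_n)}$, which changes nothing below.)

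For the strong-limit claim I would first strip off the part of a vector lying outside $\overline{\mathcal{R}(A)}$. Since $\mathcal{R}(A_n)=A(X_n)\subseteq\overline{\mathcal{R}(A)}$, the projections satisfy $Q_{\mathcal{R}(A_n)}Q_{\overline{\mathcal{R}(A)}}=Q_{\mathcal{R}(A_n)}$, so for every $y\in Y$, writing $z:=Q_{\overline{\mathcal{R}(A)}}y\in\overline{\mathcal{R}(A)}$,
\[
\bigl\Vert Q_{\mathcal{R}(A_n)}y-Q_{\overline{\mathcal{R}(A)}}y\bigr\Vert=\bigl\Vert Q_{\mathcal{R}(A_n)}z-z\bigr\Vert=\mathrm{dist}\bigl(z,\overline{A(X_n)}\bigr).
\]
Hence it suffices to show $\mathrm{dist}(z,\overline{A(X_n)})\to 0$ for each $z\in\overline{\mathcal{R}(A)}$. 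I would prove this first for $z\in\mathcal{R}(A)$, say $z=Ax$: then $\mathrm{dist}(z,\overline{A(X_n)})\le\Vert Ax-AP_nx\Vert\le\Vert A\Vert\,\Vert x-P_nx\Vert\to 0$ by the completeness condition. Then I would pass to a general $z\in\overline{\mathcal{R}(A)}$ by approximation: given $\varepsilon>0$ pick $z'\in\mathcal{R}(A)$ with $\Vert z-z'\Vert<\varepsilon$, so $\mathrm{dist}(z,\overline{A(X_n)})\le\Vert z-z'\Vert+\mathrm{dist}(z',\overline{A(X_n)})<\varepsilon+o(1)$, whence $\limsup_n\mathrm{dist}(z,\overline{A(X_n)})\le\varepsilon$; letting $\varepsilon\to 0$ finishes the proof.

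The only delicate point — and it is the sole (minor) obstacle — is that $A$ is not assumed to have closed range, so one cannot write $z=Ax$ for every $z\in\overline{\mathcal{R}(A)}$; this is exactly why the argument is organized as ``verify on $\mathcal{R}(A)$, then extend by density'', and why the identity $Q_{\mathcal{R}(A_n)}Q_{\overline{\mathcal{R}(A)}}=Q_{\mathcal{R}(A_n)}$ is invoked beforehand to discard the component of $y$ orthogonal to $\overline{\mathcal{R}(A)}$. Note that neither nestedness of $\{X_n\}$ nor anything beyond $\Vert A\Vert<\infty$ and the completeness hypothesis is used.
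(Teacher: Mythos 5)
Your proof is correct: the range identity is immediate, the reduction via $Q_{\mathcal{R}(A_n)}Q_{\overline{\mathcal{R}(A)}}=Q_{\mathcal{R}(A_n)}$ is valid because $\mathcal{R}(A_n)\subseteq\overline{\mathcal{R}(A)}$, and the ``verify on $\mathcal{R}(A)$ using $\Vert A\Vert\,\Vert x-P_nx\Vert\to0$, then extend by density'' step is exactly the standard argument. The paper itself gives no proof here --- it only cites [6, Lemma~2.2~(a)] --- but your argument matches the way the lemma is actually used in the paper's Lemma~2.2 (producing $x_n\in X_n$ with $AP_nx_n\to y$ for $y\in\overline{\mathcal{R}(A)}$), so there is nothing to add.
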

\begin{proof}
See [6, Lemma 2.2 (a)].
\end{proof}
Based on above result, we give the following result:
\begin{lemma}
Let $ A \in \mathcal{B}(X,Y) $ and $ A_n := Q_n A P_n : X_n \to Y_n $, $ (\{ X_n \}_{n \in \mathrm{N}}, \{ Y_n \}_{n \in \mathrm{N}}) $ satisfies the completeness condition, that is,
\begin{equation*}
\hbox{ \raise-2mm\hbox{$\textstyle s-\lim \atop \scriptstyle {n \to \infty}$}} P_n = I_X, \hbox{ \raise-2mm\hbox{$\textstyle s-\lim \atop \scriptstyle {n \to \infty}$}} Q_n = I_Y,
\end{equation*}
where $ Q^{(n)}_{\mathcal{R}(A_n)} $ denotes the orthogonal projection mapping $ Y_n $ onto $ \mathcal{R}(A_n) $. Then there holds that
\begin{equation*}
\hbox{ \raise-2mm\hbox{$\textstyle s-\lim \atop \scriptstyle {n \to \infty}$}} Q^{(n)}_{\mathcal{R}(A_n)} Q_n  Q_{\overline{\mathcal{R}(A)}} = Q_{\overline{\mathcal{R}(A)}}.
\end{equation*}
\end{lemma}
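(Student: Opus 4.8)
The plan is to reduce the strong‑limit assertion to a best‑approximation statement and then exhibit an explicit approximating sequence lying in $\mathcal{R}(A_n)$. The first step is the observation that $\mathcal{R}(A_n) = Q_nAP_n(X_n) \subseteq Y_n$, so for the nested closed subspaces $\overline{\mathcal{R}(A_n)} \subseteq Y_n \subseteq Y$ one has the standard factorization of orthogonal projections $Q^{(n)}_{\mathcal{R}(A_n)} Q_n = Q_{\overline{\mathcal{R}(A_n)}}$, where the right‑hand side now denotes the orthogonal projection of all of $Y$ onto $\overline{\mathcal{R}(A_n)}$. Hence it suffices to prove $Q_{\overline{\mathcal{R}(A_n)}}\,Q_{\overline{\mathcal{R}(A)}} \to Q_{\overline{\mathcal{R}(A)}}$ strongly on $Y$; and since $Q_{\overline{\mathcal{R}(A)}}$ maps $Y$ into $\overline{\mathcal{R}(A)}$, that reduces further to showing $Q_{\overline{\mathcal{R}(A_n)}}\, y \to y$ for each fixed $y \in \overline{\mathcal{R}(A)}$.

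Next I would fix $y \in \overline{\mathcal{R}(A)}$ and $\varepsilon > 0$, and use that $y$ lies in the closure of $\mathcal{R}(A)$ to pick $x \in X$ with $\Vert Ax - y\Vert < \varepsilon$. Setting $z_n := A_n P_n x = Q_n A P_n x \in \mathcal{R}(A_n)$ and invoking $P_n \to I_X$, $Q_n \to I_Y$ strongly together with the boundedness of $A$,
\begin{equation*}
\Vert z_n - Ax \Vert \le \Vert Q_n A (P_n x - x)\Vert + \Vert Q_n A x - A x\Vert \le \Vert A\Vert\,\Vert P_n x - x\Vert + \Vert Q_n A x - A x\Vert \longrightarrow 0 .
\end{equation*}
Because $Q_{\overline{\mathcal{R}(A_n)}}\, y$ is the best approximation to $y$ from $\overline{\mathcal{R}(A_n)}$ and $z_n \in \overline{\mathcal{R}(A_n)}$, we get $\Vert Q_{\overline{\mathcal{R}(A_n)}}\, y - y\Vert \le \Vert z_n - y\Vert$, whence $\limsup_{n} \Vert Q_{\overline{\mathcal{R}(A_n)}}\, y - y\Vert \le \lim_n \Vert z_n - y\Vert = \Vert Ax - y\Vert < \varepsilon$. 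Letting $\varepsilon \downarrow 0$ gives $Q_{\overline{\mathcal{R}(A_n)}}\, y \to y$, which closes the chain of reductions.

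I expect the only genuinely delicate point to be the projection factorization $Q^{(n)}_{\mathcal{R}(A_n)} Q_n = Q_{\overline{\mathcal{R}(A_n)}}$, i.e. the bookkeeping between projections taken within $Y_n$ and within $Y$; the remainder is a routine $\varepsilon$‑argument closely patterned on the proof of Lemma 2.1 (alternatively one could apply Lemma 2.1 to $AP_n$ and then push it through $Q_n$, but constructing $z_n$ directly is cleaner). A small point of care: if $\mathcal{R}(A_n)$ is not closed one reads $Q^{(n)}_{\mathcal{R}(A_n)}$ as the projection onto its closure, and the argument is unchanged since $z_n \in \mathcal{R}(A_n) \subseteq \overline{\mathcal{R}(A_n)}$ already.
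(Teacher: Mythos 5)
Your proof is correct and follows essentially the same route as the paper: both arguments hinge on the best-approximation property of the projection onto $\mathcal{R}(A_n)$, bounding the projection error by the distance from $y$ to an explicit element $Q_nAP_n(\cdot)\in\mathcal{R}(A_n)$ that converges to (something within $\varepsilon$ of) $y$. The only cosmetic differences are that the paper obtains its approximating sequence $AP_nx_n\to y$ directly from Lemma 2.1 and handles the nested projections via the triangle inequality through $Q_ny$, whereas you build $z_n=Q_nAP_nx$ by hand from a single $\varepsilon$-approximant and use the factorization $Q^{(n)}_{\mathcal{R}(A_n)}Q_n=Q_{\overline{\mathcal{R}(A_n)}}$, both of which are valid.
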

\begin{proof}
It is sufficient to prove that
\begin{equation*}
\hbox{ \raise-2mm\hbox{$\textstyle s-\lim \atop \scriptstyle {n \to \infty}$}} Q^{(n)}_{\mathcal{R}(A_n)} Q_n y = y, \quad  \forall y \in \overline{\mathcal{R}(A)}
\end{equation*}
By (2.1) and the definition of orthogonal projection, we know that, for any $ y \in \overline{\mathcal{R}(A)} $, there exist a sequence $ \{ x_n \} $ such that $ x_n \in X_n $ and
\begin{equation*}
\Vert A P_n x_n - y \Vert \to 0, \ n \to \infty.
\end{equation*}
Then 
\begin{equation*}
\Vert Q_n A P_n x_n - Q_n y \Vert \leq \Vert Q_n \Vert \Vert  A P_n x_n - y \Vert \to 0, \ n \to \infty,
\end{equation*}
and thus, for any $ y \in \overline{\mathcal{R}(A)} $,
\begin{equation*}
\Vert  Q^{(n)}_{\mathcal{R}(A_n)} Q_n y - y \Vert \leq \Vert Q^{(n)}_{\mathcal{R}(A_n)} Q_n y - Q_n y \Vert + \Vert Q_n y - y \Vert
\end{equation*}
\begin{equation*}
\leq   \Vert Q_n A P_n x_n - Q_n y \Vert + \Vert Q_n y - y \Vert  \to 0, \ n \to \infty.
\end{equation*}
\end{proof}

\begin{proof}
We utilize the main idea in [4, Theorem 2.2 (c)] to complete the proof of our main result:
Let $ b \in Y \setminus \mathcal{R}(A) \oplus \mathcal{R}(A)^\perp $, we assume that there exist a bounded subsequence $ \{ A^\dagger_{n_k } Q_{n_k} Q_{\overline{\mathcal{R}(A)}} b \} $ of $ \{ A^\dagger_n Q_n Q_{\overline{\mathcal{R}(A)}} b \} $, that is,
\begin{equation*}
\sup_k \Vert A^\dagger_{n_k } Q_{n_k} Q_{\overline{\mathcal{R}(A)}} b \Vert  < \infty
\end{equation*}
Since Hilbert space $ X $ is reflexive, by Eberlein-Shymulan theorem, we can abstract a weakly convergent sequence from  $ \{ A^\dagger_{n_k } Q_{n_k} Q_{\overline{\mathcal{R}(A)}} b \} $, again denote by $ \{ A^\dagger_{n_k } Q_{n_k} Q_{\overline{\mathcal{R}(A)}} b \} $, that is,
\begin{equation*}
A^\dagger_{n_k } Q_{n_k} Q_{\overline{\mathcal{R}(A)}} b \stackrel{w}{\longrightarrow} x_{\infty} \in X.
\end{equation*}
then
\begin{equation}
A A^\dagger_{n_k }Q_{n_k} Q_{\overline{\mathcal{R}(A)}} b \stackrel{w}{\longrightarrow} A x_{\infty},\ Q_{n_k} A A^\dagger_{n_k } Q_{n_k} Q_{\overline{\mathcal{R}(A)}} b \stackrel{w}{\longrightarrow} A x_{\infty}.
\end{equation}
Notice that
\begin{equation*}
 Q_{n_k} A A^\dagger_{n_k } Q_{n_k} Q_{\overline{\mathcal{R}(A)}} b = Q_{n_k} A P_{n_k} A^\dagger_{n_k } Q_{n_k} Q_{\overline{\mathcal{R}(A)}} b = A_{n_k} A^\dagger_{n_k } Q_{n_k} Q_{\overline{\mathcal{R}(A)}} b
 \end{equation*}
 \begin{equation*}
= Q^{(n)}_{\mathcal{R}(A_{n_k})} Q_{n_k} Q_{\overline{\mathcal{R}(A)}} b \stackrel{s}{\longrightarrow} Q_{\overline{\mathcal{R}(A)}} b, \ k \to \infty \quad  (\textrm{by Lemma 2.2})
 \end{equation*}
This together with (2.2) gives $ Q_{\overline{\mathcal{R}(A)}} b = A x_{\infty} \in \mathcal{R}(A). $ By the decomposition $ Y =  \overline{\mathcal{R}(A)} \oplus \overline{\mathcal{R}(A)}^\perp = \overline{\mathcal{R}(A)} \oplus \mathcal{R}(A)^\perp  $,  $ b \in Y \setminus \mathcal{R}(A) \oplus \mathcal{R}(A)^\perp $ implies that $ Q_{\overline{\mathcal{R}(A)}} b \in \overline{\mathcal{R}(A)} \setminus \mathcal{R}(A) $. This is a contradiction.
\newline \indent In this way, we conclude that, any subsequence of $ \{ A^\dagger_{n_k } Q_{n_k} Q_{\overline{\mathcal{R}(A)}} b \} $ has a subsubsequence which diverge to infinity. (1.6) follows.
\end{proof}

\section{Applications}
\begin{example}
(Numerical differentiation):
\indent Set 
\begin{equation}
A \varphi(x) :=  \int^x_0  \varphi (t) dt = y(x), \ x \in (0,2\pi)
\end{equation}
It is a common and efficient way to establish stable numerical differentiation  by solving (3.1) with Petrov-Galerkin methods(See [7]).$ A \in \mathcal{C}(L^2(0,2\pi)) $ with range $ \mathcal{R}(A) = \mathcal{H}^1_0(0,2\pi) $, where
 \begin{equation*}
 \mathcal{H}^1_0(0,2\pi) := \{ \varphi \in H^1(0,2\pi): \varphi(0) = 0 \}, 
 \end{equation*}
$ H^1 $ denotes the classical Sobolev space of order $ 1 $. Its closure $ \overline{\mathcal{R}(A)} = L^2 (0,2\pi) $. Thus, application of Theorem 1.1 gives that, for $ y \in L^2(0,2\pi) \setminus \mathcal{H}^1_0(0,2\pi) $, with arbitrary $ L^2(0,2\pi) $ basis, for instance, trigonometric polynomial, Legendre polynomial, Haar functions, piecewise constant, the approximate solution $ \{ A^\dagger_n Q_n y \}  $ induced by corresponding Petrov-Galerkin methods must all diverge to infinity. This emphasize the fact that, with no proper initial value condition, that is, $ \varphi(0) = 0 $, any Petrov-Galerkin method for numerical differentiation of first order through (3.1) must fail. Thus using $ y(x) - y(0) $ to replace the function $ y $ is a must.

\end{example}

\begin{example}
Let Symm's integral equation of the first kind be 
\begin{equation}
 (K \Psi)(t) := - \frac{1}{\pi} \int^{2\pi}_0 \Psi (s) \ln \vert \boldsymbol{\gamma}(t) - \boldsymbol{\gamma}(s) \vert ds = g(t), \quad x \in [0,2\pi],
\end{equation}
 with $ \gamma(s) = (a(s), b(s)),\ s \in [0,2\pi] $ being a parametric representation of boundary $ \partial \Omega $ of $ \Omega \subset \mathrm{R}^2 $  where $ \Omega $ is some bounded, simply connected and analytic region which satisfies the following two properties
 \begin{itemize}
 \item $ \vert {\boldsymbol{\gamma}}' (s) \vert > 0 $ for all $ s \in [0,2\pi] $,
 \item there exists $ z_0 \in \Omega $ with $ \vert x - z_0 \vert \neq 1 $ for all $ x \in \partial \Omega $,
 \end{itemize}
By application of [8, Lemma 2.6 (b)] of $ s = 1 $, we know that $ K \in \mathcal{B} (L^2(0,2\pi), H^1_{per} (0,2\pi)) $ with range $ \mathcal{R}(K) = H^1_{per} (0,2\pi) $ ($ H^1_{per} (0,2\pi) $ denotes the $ 2\pi $ periodic Sobolev space, see [2,5]). Composing with compact embedding $ i :  H^1_{per} (0,2\pi) \subset \subset L^2 (0,2\pi) $, it follows that  $ K \in \mathcal{C} (L^2(0,2\pi)) $ with dense range. \newline \indent Now application of Theorem 1.1 yields that, for $ g \in L^2(0,2\pi) \setminus H^1_{per} (0,2\pi) $, with arbitrary $ L^2(0,2\pi) $ basis, the approximate solution sequence $ \{ K^\dagger_n Q_n g \} $ induced by corresponding Petrov-Galerkin method must diverge to infinity.
\newline \indent In [8] we verify (1.3) under trigonometric basis and prove that,  for $ g \in L^2(0,2\pi) \setminus  H^1_{per} (0,2\pi) $, least squares, dual least squares and Bubnov-Galerkin methods with trigonometric basis all diverge to infinity. Above divergence result is a stronger one which does not depend on the choosing basis.
\end{example}

\section{Further discussion with singular value decomposition}
\indent In this section, we strengthen $ A $ from bounded linear to be compact linear operator $ A $ with singular value system $ (\sigma_n; v_n, u_n) $. Thus, we can obtain characterizations (see [3, Chapter 2.2]) as follows
\begin{itemize}
\item \begin{equation*}
 \mathcal{R}(A)\oplus \mathcal{R}(A)^\perp = \{ y \in Y:\sum^\infty_{n=1} \frac{ \vert (y, u_n) \vert^2 }{\sigma^2_n  } <  \infty \}.
\end{equation*}
\item \begin{equation*}
 \overline{\mathcal{R}(A)} = \{ y = \sum^\infty_{n=1} y_n u_n  : \{ y_k \} \in l^2 \}.
\end{equation*}
\item
\begin{equation}
Q_{\overline{\mathcal{R}(A)}} = \sum^\infty_{n=1} \langle \cdot,\ u_n \rangle u_n.
\end{equation}
\end{itemize}
Now, in term of singular value decomposition,  Theorem 1.1 can be rewritten in a more precise way.
\begin{theorem}
Let $ A $ be a compact linear operator mapping from $ X $ to $ Y $, $ X,Y $ be all infinite-dimensional Hilbert spaces, $ \{ \xi_k \}^{\infty}_{k=1} $, $ \{ \eta_k \}^\infty_{k=1} $ be arbitrary basis of $ X $, $ Y $ respectively, subspace sequence $ \{ X_n \} $, $ \{ Y_n \} $ be subspace sequence induced by $ \{ \xi_k \}^{\infty}_{k=1} $, $ \{ \eta_k \}^\infty_{k=1} $ respectively, that is, $ X_n = \textrm{span} \{ \xi_k \}^{n}_{k=1} $, $ Y_n = \textrm{span}\{ \eta_k \}^{n}_{k=1} $. Then the following results holds:
\newline \indent (1)  If $ b \in Y $ satisfies
\begin{equation*}
\sum^\infty_{n=1} \frac{1}{\sigma^2_n} \vert ( b, u_n ) \vert^2  =  \infty, \ \textrm{that is}, \ b \notin \mathcal{R}(A)\oplus \mathcal{R}(A)^\perp,
\end{equation*}
then 
\begin{equation}
\lim_{n \to \infty} \Vert A^\dagger_n Q_n Q_{\overline{\mathcal{R}(A)}} b \Vert_{X} = \infty,
\end{equation}
where $ A_n $ and $ Q_{\overline{\mathcal{R}(A)}} $ are defined in Theorem 1.1, (4.1) respectively. 
\newline \indent (2) In particular, if  $ b = \sum^\infty_{n = 1 } b_n u_n \in Y $ satisfies
\begin{equation}
 \{ b_n \} \in l^2,  \quad
\sum^\infty_{n=1} \frac{1}{\sigma^2_n} \vert ( b, u_n) \vert^2  =  \infty, \ \textrm{that is}, \ b \in \overline{\mathcal{R}(A)} \setminus \mathcal{R}(A),
\end{equation}
 then we have $ \lim_{n \to \infty} \Vert A^\dagger_n Q_n b \Vert = \infty. $
  \newline \indent (3) If $ \{ u_n \} $ forms a complete orthogonal system of $ Y $ and $ b \in Y $ satisfies
\begin{equation}
\sum^\infty_{n=1} \frac{1}{\sigma^2_n} \vert ( b, u_n) \vert^2  =  \infty, \ \textrm{that is}, \ b \in Y \setminus \mathcal{R}(A),
\end{equation}
then we have $ \lim_{n \to \infty} \Vert A^\dagger_n Q_n b \Vert = \infty. $
\end{theorem}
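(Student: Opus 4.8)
The plan is to obtain Theorem 4.1 as a direct specialization of Theorem 1.1, translating each abstract hypothesis there into the corresponding Picard-type series condition by means of the singular value characterizations of $\mathcal{D}(A^\dagger) = \mathcal{R}(A) \oplus \mathcal{R}(A)^\perp$ and of $\overline{\mathcal{R}(A)}$ recorded in the three bullets above (taken from [3, Chapter 2.2]). Since $A$ is now compact, these characterizations are available, so no genuinely new analysis is required — the work is purely in matching conditions.

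First I would treat part (1). By the first bullet, the assumption $\sum_{n=1}^\infty \sigma_n^{-2}\,\vert (b,u_n)\vert^2 = \infty$ is exactly the negation of the Picard criterion, i.e. it says $b \notin \mathcal{R}(A) \oplus \mathcal{R}(A)^\perp = \mathcal{D}(A^\dagger)$. Hence Theorem 1.1(a) applies verbatim to such $b$ and yields $(4.2)$. For part (2), the extra hypothesis $\{b_n\} \in l^2$ together with $b = \sum_n b_n u_n$ and the second bullet shows $b \in \overline{\mathcal{R}(A)}$; combining this with $b \notin \mathcal{D}(A^\dagger)$ (from the divergent Picard sum) and the identity $\overline{\mathcal{R}(A)} \cap \mathcal{D}(A^\dagger) = \mathcal{R}(A)$, which is immediate from $\mathcal{D}(A^\dagger) = \mathcal{R}(A) \oplus \mathcal{R}(A)^\perp$, we get $b \in \overline{\mathcal{R}(A)} \setminus \mathcal{R}(A)$. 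Since $b \in \overline{\mathcal{R}(A)}$ also gives $Q_{\overline{\mathcal{R}(A)}} b = b$, Theorem 1.1(b) applies and delivers $\lim_{n\to\infty}\Vert A_n^\dagger Q_n b\Vert = \infty$.

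Finally, for part (3), if $\{u_n\}$ is a complete orthogonal system of $Y$ then by the second bullet $\overline{\mathcal{R}(A)} = Y$, i.e. $A$ has dense range, so $\mathcal{R}(A)^\perp = \{0\}$ and $\mathcal{D}(A^\dagger) = \mathcal{R}(A)$. Consequently the divergent Picard condition $\sum_n \sigma_n^{-2}\vert (b,u_n)\vert^2 = \infty$ is equivalent to $b \in Y \setminus \mathcal{R}(A)$, and Theorem 1.1(c) gives the assertion.

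The only points that need care are the bookkeeping with the orthogonal decomposition $Y = \overline{\mathcal{R}(A)} \oplus \mathcal{R}(A)^\perp$ — specifically that $\mathcal{R}(A)^\perp = \overline{\mathcal{R}(A)}^\perp$ and that $Q_{\overline{\mathcal{R}(A)}}$ acts as the identity on $\overline{\mathcal{R}(A)}$ — and invoking (rather than reproving) the Picard characterization of $\mathcal{D}(A^\dagger)$ from [3]. I do not expect any substantive obstacle here: once Theorem 1.1 is in hand, Theorem 4.1 is essentially a dictionary translation, and the whole difficulty of the divergence phenomenon already resides in Theorem 1.1 and Lemma 2.2.
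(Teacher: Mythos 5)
Your proposal is correct and follows exactly the route the paper intends: Theorem 4.1 is presented there without a separate proof, precisely as a rewriting of Theorem 1.1 in which the singular value characterizations of $\mathcal{D}(A^\dagger)=\mathcal{R}(A)\oplus\mathcal{R}(A)^\perp$ and of $\overline{\mathcal{R}(A)}$ from [3, Chapter 2.2] translate the Picard-type series conditions into the hypotheses of parts (a), (b), (c). Your bookkeeping (in particular $\overline{\mathcal{R}(A)}\cap\mathcal{D}(A^\dagger)=\mathcal{R}(A)$ and $Q_{\overline{\mathcal{R}(A)}}b=b$ for $b\in\overline{\mathcal{R}(A)}$) is sound and fills in the dictionary the paper leaves implicit.
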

As a direct application of above result to Integral equation of backward heat conduction (see [3])
\begin{example}
Let the integral equation of backward heat conduction be 
\begin{equation}
\int^\pi_0 k (x, \tau) v_0 (\tau) d \tau = f(x), \quad x \in (0,\pi),
\end{equation}
with 
\begin{equation*}
 k (x, \tau) = \frac{2}{\pi} \sum^\infty_{n=1} e^{-n^2} \sin(n \tau) \sin(nx)
\end{equation*}
where $ f $ denotes the known final temperature with $ f(0) = f(\pi) = 0 $. We need to determine the initial temperature $ v_0 (x), \ x \in [0,\pi] $.
\newline \indent (4.3) is a compact linear operator in $ L^2 (0, \pi) $ with singular value system $ (e^{-n^2}, \sqrt{\frac{2}{\pi}} \sin(nx), \sqrt{\frac{2}{\pi}} \sin(nx) ) $. Notice that $ \{ \sqrt{\frac{2}{\pi}} \sin(nx) ) \} $ form an orthogonal basis of $ L^2(0,\pi) $.  By theorem 4.1 (3), we deduce that, for arbitrary $ f \in L^2(0,\pi) $ which satifies 
\begin{equation*}
 \sum^\infty_{n=1} e^{2n^2} \vert f_n \vert^2 = \infty , \quad \textrm{where} \ f_n:= \sqrt{\frac{2}{\pi}} \int^\pi_0 f(\tau) \sin(n\tau) d\tau,
 \end{equation*}
with arbitrary $ L^2(0, \pi) $ basis, the approximate solution sequence $ \{ A^\dagger_n Q_n b \} $ induced by corresponding Petrov-Galerkin method must diverge to infinity.
\end{example}

\section*{Acknowledgement}
I would like to thank some anonymous referee for the suggestion that extend the divergence criteria from compact linear operator equation into bounded linear operator equation.
\section*{References}
\bibliographystyle{elsarticle-num-names.bst}

\end{document}